\newtheorem{theorem}{Theorem}[section]
\newtheorem{algorithm}[theorem]{Algorithm}
\newtheorem{remark}[theorem]{Remark}
\begin{document}
\setcounter{page}{1} 

\begin{center}
{\LARGE \bf  Propagating Graceful Graphs and Trees}
\vspace{8mm}

{\Large \bf Keneth Adrian Dagal$^1$ and Kristoffer Karan Hugo$^2$}
\vspace{3mm}

$^1$ Nasser Vocational Training Centre \\ 
Jau, Kingdom of Bahrain \\
e-mail: \url{kendee2012@gmail.com}
\vspace{2mm}

$^2$ National University Laguna\\ 
 Laguna, Philippines\\
e-mail: \url{kristofferkghugo@gmail.com}
\vspace{2mm}

\end{center}
\vspace{10mm}

\noindent
{\bf Abstract:} In an attempt to prove the Graceful Tree Conjecture, we present two propagation of graphs. The first is to propagate graceful graphs, and the second is to propagate trees from a gracefully labeled tree. The motivation in propagating such graphs is to see how graphs behave in the lens of their adjacency matrices.  Thus, we provide the necessary algorithms for the said objectives. \\

\noindent
{\bf Keywords:} Graceful graphs, adjacency matrices, and graceful trees. \\
{\bf 2010 Mathematics Subject Classification:} 05C78, 05C85.
\vspace{5mm}

\section{Introduction} 
A graph $G$ with $m$ edges is said to be \textit{graceful} if the vertices can be labeled using the positive integers 1,2, …, $m+1$, without repetition, such that for each given edge, a weight that is equal to the absolute difference of the numbers assigned to the vertices joining the edge, the edge weights runs from 1 to $m$.\\

The adjacency matrix $A_G$ of a graph G with $n$ vertices, whose vertices $v_i \in V$, for $i$ from $1$ to $n$, is the
 $n \times n$ matrix $[a_{ij}]$ such that
\begin{center}
    $a_{ij} = \begin{cases} 
      1& v(i,j) \in E \\
      0 & v(i,j) \notin E. \\
   \end{cases}$
\end{center}

Here are some results of Bloom \cite{Bloom} about gracefully labeled adjacency matrix.

\begin{theorem}\label{Theorem1}
If $G_g$ is the set all of gracefully labeled simple graphs G with $n$ vertices and $m=n-1$ edges, then there are $\mid G_g \mid =m! = (n-1)!$ gracefully labeled $G$.
\end{theorem}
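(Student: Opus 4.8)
The plan is to set up a bijection between $G_g$ and the set of functions that assign, to each possible edge weight $k \in \{1, \ldots, m\}$, one concrete edge realizing that weight. Since a graceful labeling is by definition a bijection from the $n$ vertices onto $\{1, 2, \ldots, m+1\} = \{1, \ldots, n\}$, I would first identify each vertex with its label, so that every gracefully labeled graph in $G_g$ becomes a simple graph on the vertex set $\{1, 2, \ldots, n\}$ whose $m = n-1$ edges have absolute differences forming the set $\{1, 2, \ldots, m\}$ without repetition. Because there are exactly $m$ edges and exactly $m$ prescribed weights, the pigeonhole principle forces each weight $k$ to be attained by precisely one edge.

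The key counting step is to determine, for each fixed weight $k$, how many edges on $\{1, \ldots, n\}$ realize it. An edge of weight $k$ is an unordered pair $\{i, i+k\}$ with $1 \le i \le n-k$, so there are exactly $n-k$ such edges. I would then argue that the choices across different weights are independent: selecting one edge for each $k$ amounts to a free choice among $n-k$ options, so the total number of selections is
\[
\prod_{k=1}^{n-1} (n-k) = (n-1)(n-2)\cdots 2 \cdot 1 = (n-1)! = m!.
\]

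To convert this count into the value of $|G_g|$, I would verify the correspondence is a genuine bijection. For surjectivity, every member of $G_g$ partitions its edge set by weight, yielding exactly one edge per weight and hence a valid selection. For injectivity and well-definedness, I would observe that an edge determines its weight uniquely, so edges chosen for distinct weights are automatically distinct; consequently every selection produces a simple graph (no multi-edges, and no loops since every weight is at least $1$) with exactly $m$ distinct edges, and distinct selections give distinct edge sets. This establishes $|G_g| = \prod_{k=1}^{n-1}(n-k) = m!$.

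I do not expect a serious obstacle here; the content is essentially the observation that the weight of an edge on $\{1, \ldots, n\}$ is a rigid invariant, so the weight-by-weight choices decouple cleanly. The one point requiring care is confirming that independent selection never forces a repeated edge or violates simplicity — this is exactly where the rigidity of the weight invariant does the work, and it is the step I would state most \emph{explicitly} to guard against an undercount or overcount.
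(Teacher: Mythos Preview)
The paper does not actually supply a proof of this theorem; it is quoted as a result of Bloom, so there is no argument in the paper to compare against directly. Your proof is correct and, in fact, is precisely the counting argument implicit in the paper's adjacency-matrix characterization (Theorem~1.2): placing exactly one $1$ on the $k$-th super\-diagonal of an $n\times n$ symmetric $0$--$1$ matrix is the same as choosing one edge $\{i,i+k\}$ of weight $k$, and that diagonal has $n-k$ entries, so the total is $\prod_{k=1}^{n-1}(n-k)=(n-1)!$. Your verification that distinct weights force distinct edges is exactly the observation that different diagonals are disjoint, so no double-counting or simplicity issue arises. In short, your approach is sound and matches the standard adjacency-matrix viewpoint the paper adopts.
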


\begin{theorem}\label{Theorem2}
The adjacency matrix $A_g = [a_{ij}]$ is said to be a \textit{gracefully labeled adjacency matrix} if,  for each $c$th diagonal, there exist exactly one "1" for each $c= i-j \neq 0$ and all zeroes for $c=0$.
\end{theorem}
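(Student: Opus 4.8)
The plan is to read the statement as an equivalence: the adjacency matrix $A_g=[a_{ij}]$ of a simple graph $G$ on $n$ vertices, indexed by the vertex labels $1,2,\dots,n$, satisfies the stated diagonal condition exactly when that labeling is graceful in the sense of the Introduction (with $m=n-1$). The whole argument rests on one translation: the $c$th diagonal of $A_g$ is the set of entries $a_{ij}$ with $i-j=c$, so a $1$ sitting on that diagonal records an edge $\{i,j\}$ whose graceful weight is precisely $|i-j|=|c|$. First I would fix this dictionary between diagonals and edge weights and note two structural facts that hold for any simple graph: $A_g$ is symmetric, and its main diagonal ($c=0$) is identically zero because there are no loops. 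Symmetry means the $1$ on diagonal $c$ at position $(i,j)$ is the mirror of the $1$ on diagonal $-c$ at $(j,i)$, so the conditions for $c$ and $-c$ record the same single edge; it therefore suffices to control the diagonals with $c=1,\dots,n-1$.

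For the forward direction I would assume $G$ is gracefully labeled. By definition the weight map $\{i,j\}\mapsto|i-j|$ is a bijection from the edge set onto $\{1,2,\dots,m\}$, so for every $c\in\{1,\dots,m\}$ there is exactly one edge of weight $c$, i.e.\ exactly one $1$ on the $c$th diagonal; the zero-diagonal condition is the no-loops remark above. For the converse I would assume the diagonal condition and run the dictionary backwards: each of the $m=n-1$ positive diagonals supplies exactly one edge, these edges carry the weights $1,2,\dots,m$ with no repetition, and since the labels used are exactly $1,\dots,n=m+1$, the labeling meets the definition of graceful. A counting remark ties this back to Theorem~\ref{Theorem1}: the $c$th diagonal has $n-c$ available slots, so choosing one $1$ on each of the diagonals $c=1,\dots,n-1$ independently gives $(n-1)(n-2)\cdots 1=m!=(n-1)!$ matrices, matching the count there.

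The step I expect to be most delicate is the bookkeeping of index ranges and the symmetry redundancy. For an $n\times n$ matrix $c$ runs over $-(n-1),\dots,n-1$, and I must make sure that the $n-1$ positive diagonals correspond bijectively to the $m=n-1$ required weights, that the constraint $m=n-1$ is used exactly once and consistently, and that I do not double-count each edge when passing between diagonals $c$ and $-c$. A secondary point to state carefully is that the diagonal condition together with symmetry automatically yields a legitimate simple graph—no further consistency check on the placement of the $1$'s is needed—so the two directions really are inverse to one another.
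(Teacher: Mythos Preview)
Your argument is correct, but there is nothing in the paper to compare it against: the paper does not prove Theorem~\ref{Theorem2}. It is listed, together with Theorem~\ref{Theorem1}, under ``Here are some results of Bloom~\cite{Bloom}'' and is stated without proof. Indeed, as written it reads more like a definition (``is said to be a \emph{gracefully labeled adjacency matrix} if \dots'') than a theorem; your decision to interpret it as the equivalence ``the labeling is graceful in the sense of the Introduction (with $m=n-1$) if and only if the adjacency matrix satisfies the diagonal condition'' is the natural and intended reading, and your proof of both directions via the dictionary $\{\text{edges of weight }|c|\}\leftrightarrow\{\text{1's on diagonal }c\}$ is sound. The symmetry/no-loops remarks and the bookkeeping you flag as delicate are handled correctly, and the $(n-1)!$ count you append is exactly the content of Theorem~\ref{Theorem1}, which the paper likewise states without proof. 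In short, you have supplied the justification that the paper simply cites.
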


The problem of determining which graphs are graceful evolved from the 1963 conjecture of Ringel and Kotzig regarding the decomposition of the complete graph on $2n+1$ vertices into isomorphic copies of any tree of length $n$. Bloom \cite{Bloom} has shown that the problem can be solved by showing that all trees are graceful.

While the problem remains unsolved, a direction of research is to discover families of graceful trees and methods of constructing graceful trees. Most families of graceful trees were discovered by trial and error. We can expect that graphs, in general, become more challenging to handle as the number of vertices of the graph increases. Adjacency matrices describe many of the corresponding graphs' characteristics and properties. Interest in adjacency matrices is motivated by these properties when the corresponding graph is labeled gracefully. Several families of trees are shown to be graceful by building the adjacency matrix to be graceful.

Some progress in broadening the class of trees known to be graceful has been achieved by examining the adjacency matrices of graceful trees. Preliminary independent observations of the characteristics of the adjacency matrices of graceful trees were established by Haggard and McWha\cite{Haggard} and Bloom \cite{Bloom2} . In Bloom's paper, the adjacency matrices of graceful trees may be used to generate new graceful trees. The Stanton and Zarnke\cite{Stanton}  techniques can be shown to be special cases of assembling the adjacency matrices of graceful trees into larger adjacency matrices of larger graceful trees.

Cavalier \cite{Cavalier}  characterized the adjacency matrix of a graceful graph and families of graceful trees. Loyola \cite{Loyola}  discovered three new methods of constructing graceful trees and three families of graceful trees.
\section{Propagating Graceful Graphs}

We now discuss the two algorithms generating graceful graphs by adding a vertex or vertices.

\begin{algorithm}{Single-vertex propagation of graceful graphs.}
\begin{enumerate}
    \item Consider a graceful graph $G$ with $n$ vertices and $m = n-1$ edges and use the gracefully labeled  adjacency matrix of $G$ in the form $$A_g = [a_{ij}].$$ 
    \item Extend the main diagonal $D_c=D_0$ with all zero entries of $A_g$ by augmenting the entry $a_{(n+1)(n+1)} = 0$ as shown below:
\begin{center}
$
 \begin{bmatrix}
A_g & \\
 & 0 
\end{bmatrix}  
$
\end{center}

\item Insert two diagonals in $A_g$. The new $1$st diagonal is $D_{1}’ = \{b_{12},b_{23}, . . .,b_{n(n+1)}\}$ above  $D_0$ and the new $-1$st diagonal $D_{-1}’ = \{b_{21},b_{32},…,b_{(n+1)n}\}$ below it, to form the $(n+1)\times (n+1)$ symmetric transition matrix $T(A_g)$ as shown below:

$$
T(A_g)=\left[
\begin{array}{ccccccc}
0 & b_{12} & a_{12} & a_{13} & \cdots & \cdots & a_{1n}\\
b_{12} & 0 & b_{23} & a_{23} & \cdots & \cdots & a_{2n}\\
 \vdots & \ddots & \ddots & \ddots & \ddots & \ddots & \vdots\\
 \vdots & \ddots & \ddots & \ddots & \ddots & \ddots & \vdots\\
 \vdots & \ddots & \ddots & \ddots & \ddots & \ddots & \vdots\\
 a_{(n-1)1} & a_{(n-1)2} & \cdots &\ddots & \ddots &0 &b_{(n)(n+1)} \\
a_{n1} & a_{n2} & a_{n3} &\cdots & \cdots & b_{(n+1)(n)} & 0\\
\end{array}
\right]
$$
\item For $i = 1$, set $b_{i(i+1)}$ to 1 and all other entries of $D_1’ = D_{-1}’$ to zero to obtain the updated $T_i(A_g)$.
\item Use $T_i(A_g)$ to generate the graph $G_i’$ on $n+1$ vertices and $m+1 = n$ edges.
\item Repeat Steps 4 and 5 for $i = 2,3,\cdots ,n$.
\end{enumerate}
\end{algorithm}

\begin{theorem}
All graphs generated by Algorithm 2.1 are graceful.
\end{theorem}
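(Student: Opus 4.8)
The plan is to show that for each $i \in \{1, \dots, n\}$ the updated transition matrix $T_i(A_g)$ satisfies the diagonal-counting criterion of Theorem \ref{Theorem2}, whence the graph $G_i'$ it encodes is graceful. First I would record that $T_i(A_g)$ is a legitimate adjacency matrix of a simple graph on $n+1$ vertices: by construction its main diagonal $D_0$ is identically zero, its entries lie in $\{0,1\}$ (the $a_{ij}$ are inherited from the graceful matrix $A_g$, and the single nonzero $b_{i(i+1)}=1$ is placed by Step 4), and the symmetric placement of the $b$-entries together with the symmetry of $A_g$ makes $T_i(A_g)$ symmetric. Hence $G_i'$ is a well-defined simple graph, and counting the ones in the upper triangle shows it has $(n-1)+1 = n = m+1$ edges, as claimed in Step 5.

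The heart of the argument is to track what the construction does to the diagonals. I would describe Step 3 as a shift: inserting the new first superdiagonal $D_1'$ pushes every original entry $a_{ij}$ (with $j>i$) from its seat on the $(j-i)$th superdiagonal of $A_g$ into position $(i,j+1)$, that is, onto the $(j-i+1)$th superdiagonal of $T_i(A_g)$. Thus the map $c \mapsto c+1$ carries the superdiagonals $1, 2, \dots, n-1$ of $A_g$ bijectively onto the superdiagonals $2, 3, \dots, n$ of $T_i(A_g)$, while no original entry can land on $D_0$ or on $D_1'$ because $j>i$ forces the target column $j+1 \ge i+2$. Since $A_g$ is graceful, Theorem \ref{Theorem2} guarantees exactly one $1$ on each of its superdiagonals $1, \dots, n-1$, so the shift deposits exactly one $1$ on each of the superdiagonals $2, \dots, n$ of $T_i(A_g)$.

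It remains to account for the new first superdiagonal. By Step 4 exactly one of the $b$-entries, namely $b_{i(i+1)}$, equals $1$, so $D_1'$ carries precisely one $1$. Combining this with the previous paragraph, every superdiagonal $1, 2, \dots, n$ of $T_i(A_g)$ contains exactly one $1$; by symmetry the same count holds on the subdiagonals $-1, \dots, -n$, and $D_0$ is all zeroes. This is exactly the condition of Theorem \ref{Theorem2}, so $T_i(A_g)$ is a gracefully labeled adjacency matrix and $G_i'$ is graceful. Running this verification for each $i = 1, \dots, n$ (Step 6) proves the theorem.

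I expect the only delicate point to be the index bookkeeping of the shift in the middle paragraph, namely confirming that the insertion of $D_1'$ moves the original superdiagonals up by exactly one position, with no collision against the new diagonal and no entry pushed outside the enlarged $(n+1)\times(n+1)$ frame. Once this accounting is pinned down, the gracefulness of $A_g$ supplied by Theorem \ref{Theorem2} does all the remaining work, and the argument is uniform in $i$.
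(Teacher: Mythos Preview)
Your proof is correct and follows essentially the same route as the paper's: both argue that the original diagonals of $A_g$ become the superdiagonals $2,\dots,n$ of $T_i(A_g)$ (the paper simply states that ``the other diagonals of $T_i(A_g)$ are precisely the diagonals of $A_g$''), while the inserted diagonal $D_1'$ carries exactly one $1$ by Step~4, so Theorem~\ref{Theorem2} applies. Your version is just more explicit about the index shift and about checking that $T_i(A_g)$ is a valid symmetric $0$--$1$ matrix.
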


\begin{proof}
Let $G$ be a gracefully labeled simple graph on $n$ vertices and $n-1$ edges and $A_g$ be the gracefully labeled adjacency matrix of $G$.  By theorem 1.2, only one 1 for each diagonal of $A_g$ exists except for the main diagonal $D_0$ where all entries are 0.

Let $T_i(A_g)$ be a graph generated by Algorithm 2.1 when applied to $G$.  By construction, $T_i(A_g)$  is a simple graph on $n+1$ vertices and $n$ edges. Also, $b_{i(i+1)} = 1 $ and $ b_{k(k+1)} = 0$ for each $k \neq i$.  Thus, there exists one 1 for diagonals $D_{1}’$  and $D_{-1}’$ of $T_i(A_g)$.  Since the other diagonals of $T_i(A_g)$ are precisely the diagonals of $A_g$, then $T_i(A_g)$  is graceful. 
\end{proof}

Note that applying Algorithm 2.1 for each of the $m! = (n-1)!$ graceful graphs $G$ with $n$ vertices and $m = n-1$ edges generate $n$ graceful graphs $G_i$’ on $n+1$ vertices and $m+1 = n$ edges. Thus, Algorithm 2.1 generates all the $n(n-1)! = (m+1)! = n!$ graceful graphs on $n+1$ vertices and $ m+1 = n $ edges. Recursive applications of Algorithm 2.1 exhaustively yield graceful graphs with the desired number of vertices or edges.

\begin{algorithm}{Multiple-vertex propagation of graceful graphs.}
\begin{enumerate}
    \item Consider a graceful graph $G$ with $n$ vertices and $m = n-1$ edges and use the gracefully labeled  adjacency matrix of $G$ in the form $$A_g = [a_{ij}].$$ 
    \item Augment $n_e$ variable rows immediately below  and $n_e$ variable columns immediately to the right of $A_g$ to form the $(n+n_e)\times(n+n_e)$ transition matrix $T(A_g)$. The main diagonal elements of $A_g$ are all zero. Set block $D$ of $T(A_g)$ to zero so that $T(A_g)$ takes the following form:
$$
 T(A_g) =\left[
 \begin{array}{cc}
A_g & B \\
B & D 
\end{array} 
\right]=\left[
 \begin{array}{cc}
A_g & B \\
B & 0
\end{array} 
\right]
$$

\item Since every diagonal except at $c=0$ of $A_g$, transform $B$ into two forms: $B_L$ as a lower triangular matrix and $B_U$ as an upper triangular matrix such that each nonzero diagonal of the matrix has exactly one 1 and all others are zero.
$$
T(A_g) =\left[
 \begin{array}{cc}
A_g & B_U \\
B_L & 0
\end{array} 
\right]
$$
\end{enumerate}
\end{algorithm}

\begin{theorem}
All graphs generated by Algorithm 2.3 are graceful.
\end{theorem}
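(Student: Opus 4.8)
The plan is to verify directly that the augmented matrix $T(A_g)$ satisfies the diagonal criterion of Theorem 1.2, from which gracefulness of its associated graph on $n+n_e$ vertices follows at once. The entire argument is bookkeeping of which diagonal $D_c$ of the $(n+n_e)\times(n+n_e)$ matrix each block contributes a $1$ to, so first I would fix the convention that $D_c$ collects the entries in row $i$, column $j$ with $j-i=c$. Under this convention $A_g$, being gracefully labeled, carries exactly one $1$ on each of $D_1,\dots,D_{n-1}$ (and symmetrically on $D_{-1},\dots,D_{-(n-1)}$) and nothing on $D_0$.

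Next I would locate the augmented blocks. An entry of $B_U$ in local position $(i,j)$ occupies row $i$ and column $n+j$ of $T(A_g)$, hence lies on diagonal $c=n+j-i$. Requiring $B_U$ to be upper triangular means $j\ge i$, which forces $c\ge n$; thus $B_U$ can deposit $1$'s only on the genuinely new diagonals $D_n,D_{n+1},\dots,D_{n+n_e-1}$ and can never disturb the diagonals already occupied by $A_g$. By the symmetry $B_L=B_U^{T}$ the lower-triangular block mirrors this, contributing only to $D_{-n},\dots,D_{-(n+n_e-1)}$. The zero block $D$ occupies rows and columns $n+1,\dots,n+n_e$, so its entries lie on the low-index diagonals $D_{-(n_e-1)},\dots,D_{n_e-1}$; setting it to zero is exactly what keeps these diagonals, already served elsewhere, from receiving a spurious second $1$.

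With this in hand I would assemble the tally: the diagonals $D_{\pm1},\dots,D_{\pm(n-1)}$ carry their unique $1$ from $A_g$, the diagonals $D_{\pm n},\dots,D_{\pm(n+n_e-1)}$ carry their unique $1$ from $B_U$ and $B_L$, the main diagonal $D_0$ stays entirely zero, and no diagonal is hit twice. I would also confirm feasibility, namely that one $1$ can really be placed on each new diagonal: the diagonal $D_{n+k}$ meets $B_U$ in the cells $(i,i+k)$ with $1\le i\le\min(n,\,n_e-k)$, a nonempty set for each $k=0,\dots,n_e-1$. Simplicity is immediate, since $T(A_g)$ is symmetric with a zero main diagonal and $0/1$ entries. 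Theorem 1.2 then certifies $T(A_g)$ as a gracefully labeled adjacency matrix, so its graph on $n+n_e$ vertices and $n+n_e-1$ edges is graceful.

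The main obstacle, and the real content of the proof, is the diagonal-index computation of the second step: one must see that upper and lower triangularity of the augmented blocks is precisely the condition confining their $1$'s to the high-index diagonals $D_{\ge n}$, disjoint from the range $D_1,\dots,D_{n-1}$ of $A_g$, while the zeroing of $D$ leaves the low-index diagonals untouched. Once that separation is in place, the rest is the routine check that the resulting configuration meets the one-per-diagonal criterion of Theorem 1.2.
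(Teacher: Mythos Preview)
Your proof is correct and follows essentially the same route as the paper's: both arguments verify that every nonzero diagonal of $T(A_g)$ carries exactly one $1$---those meeting $A_g$ inherit their $1$ from $A_g$ with the remaining entries zeroed out, while those confined to $B_U$ (resp.\ $B_L$) receive their $1$ by construction---and then invoke Theorem~1.2. Your version simply makes the bookkeeping explicit (the index computation showing that upper-triangularity of $B_U$ forces $c\ge n$, the symmetry $B_L=B_U^{T}$, and the feasibility check on the new diagonals), whereas the paper leaves these details implicit.
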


\begin{proof}
Let $G$ be a gracefully labeled simple graph on $n$ vertices and $n-1$ edges and $A_g$ be the gracefully labeled adjacency matrix of $G$.  By theorem 1.2, only one 1 for each diagonal of $A_g$ exists except for the main diagonal $D_0$ where all entries are 0.

Let $G_i’$ be a graph generated by Algorithm 2.3 when applied to $G$. By construction, $G_i’$ is a simple graph on $n+n_e$ vertices and $n+n_e-1$ edges.

If a diagonal of $T(A_g)$ contains an element of $A_g$, then there exists one 1 since those 1's are in $A_g$ and the remaining elements of the diagonal are set to 0. If a diagonal of  $T(A_g)$ does not contain an element of AG, then there exists one  1 since exactly one of its elements is set to 1 and the remaining elements of the diagonal are set to 0.

By Theorem 1.2, the resulting graph is graceful.

\end{proof}
\begin{remark}
Algorithm 2.3 shows a way to propagate a graceful graph from a certain graceful graph such that this graph is a subgraph of the propagated graceful graph. This algorithm is not exhaustive unlike Algorithm 2.1.
\end{remark}

\section{Propagating trees from a gracefully labeled tree}

For this section, we fix a gracefully labeled $n$-vertex tree $T$ with an $n \times n$ gracefully labeled adjacency matrix $A_t$.  Our objective is to generate all trees with $n+1$-vertex such that $T$ is a subgraph.

\begin{algorithm}{Propagating all trees with 1 more vertex from a given graceful tree}
\begin{enumerate}
    \item Consider a graceful tree $T$ with $n$ vertices and $m = n-1$ edges and use the gracefully labeled  adjacency matrix of $T$ in the form $$A_t = [a_{ij}].$$
    
    \item Add the vertex $v_{n+1}$ by transforming $A_t$ to the transition matrix :
$$
T(A_t) =\left[
 \begin{array}{cc}
A_t & V \\
V^T & 0 
\end{array} 
\right]
$$

where $V=[v_{ij}]$ is an $n \times 1$ matrix with $v_{i1}=1$ for $i=1$ and all else  are $0$'s.
\item Relabel $v_{n+1}$ to $v_{n+2-i}$ and relabel all vertex-labeled $v_s$ more than $v_{n+2-i}$ to $v_{s+1}$.

\item Repeat Steps 2 and 3 for $i = 2,3,\cdots ,n$.

\end{enumerate}
\end{algorithm}

\begin{theorem}{ Generating all trees.}

Let $T_n$ be a tree with $n$ vertices. Algorithm 3.1 generates all trees.
\end{theorem}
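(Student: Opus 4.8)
The plan is to argue by induction on the number of vertices $n$, using the elementary fact that every tree on $n+1$ vertices is obtained from some tree on $n$ vertices by attaching a single pendant vertex. For the base cases one verifies directly that the one- and two-vertex trees are graceful and that the two-vertex tree is produced from the one-vertex tree by a single pass of Algorithm 3.1. For the inductive step, assume that every tree on $n$ vertices is generated by recursive application of Algorithm 3.1 (and is therefore graceful), let $T'$ be an arbitrary tree on $n+1$ vertices, pick a leaf $\ell$ of $T'$ with unique neighbor $u$, and set $T = T' - \ell$, a tree on $n$ vertices. By the inductive hypothesis $T$ is generated and carries a gracefully labeled adjacency matrix $A_t$; the goal is then to show that, for a suitable input labeling and index $i$, applying Algorithm 3.1 to $A_t$ returns a graceful labeling of $T'$.

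Two things must be checked, and neither is automatic. First, Step 3 must preserve gracefulness: inserting the new vertex at position $n+2-i$ and raising every label exceeding $n+2-i$ by one increases by exactly one the weight of each original edge that straddles the insertion position, leaves the remaining weights fixed, and gives the new pendant edge weight $n+1-i$. For the resulting multiset to equal $\{1,\dots,n\}$ the straddling pattern must mesh precisely with the weights already present, and this is a genuine constraint rather than a formality: a small example already shows that an arbitrary $i$ applied to an arbitrary graceful labeling of $T$ can produce a repeated weight. Part of the work, therefore, is to select---for the given target $T'$---an index $i$ and an input labeling of $T$ for which Step 3 does return a graceful labeling, invoking the single-$1$-per-diagonal criterion of Theorem 1.2 to certify the output.

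The second and deeper point is structural, and is where I expect the main obstacle to lie. Step 2 always attaches the new vertex to the vertex carrying label $1$, so the $n$ trees produced from a fixed labeled $T$ as $i$ ranges over $\{1,\dots,n\}$ are merely $n$ distinct graceful labelings of the single abstract tree obtained by hanging a pendant on the label-$1$ vertex of $T$. All the freedom needed to realize an arbitrary $T'$ must therefore come from the choice of the input labeling of $T$, and to recover $T'$ one needs a graceful labeling of $T$ in which the neighbor $u$ of the deleted leaf carries label $1$. The argument thus rests on the reachability lemma that every vertex of every graceful tree can be assigned label $1$ by some graceful labeling. I would isolate this lemma and attack it by analyzing how the Step 3 reindexing acts on the set of graceful labelings; the difficulty is that this statement is precisely the engine of the induction, so proving it for all graceful trees would by itself establish that every tree is graceful. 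The hard part, then, is to secure this reachability without circularly presupposing the gracefulness one is trying to propagate, and I would regard closing that gap as the crux of the entire proof.
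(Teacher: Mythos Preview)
Your analysis diverges from the paper in two significant ways, one about scope and one about the reading of the algorithm.

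First, the theorem asserts only that Algorithm~3.1 generates all trees as abstract graphs; it does \emph{not} claim that the resulting labelings are graceful. The paper makes this explicit in its Concluding Remarks: ``Algorithm 3.1 generates all trees but not necessarily graceful.'' Your two checks---that Step~3 preserves the one-$1$-per-diagonal property, and the reachability lemma that every vertex can carry label~$1$---therefore aim at a strictly stronger statement than what is being proved. The paper's own argument is purely structural: induction on $n$, using the fact that any $(n{+}1)$-vertex tree minus a leaf is an $n$-vertex tree, together with the claim that the algorithm attaches a pendant to each vertex of the input and so is exhaustive.

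Second, and this is what drove you to the reachability lemma, you read Step~2 as always attaching the new vertex to the vertex labeled~$1$. The paper's intended reading is different: in iteration $i$ the single $1$ in the column $V$ sits in row $i$, so the pendant is attached to $v_i$; Step~4's ``repeat for $i=2,3,\dots,n$'' runs that row index over all vertices. This is why the proof can say ``we connect an edge to every labeled vertex of a given tree $T_k$'' and declare exhaustiveness. Under that reading the $n$ outputs from a fixed input tree are $n$ structurally distinct trees (one pendant hung at each vertex), not $n$ relabelings of a single tree, and no reachability lemma is required. Your remark that the literal wording of Step~2 is ambiguous is fair, but the paper resolves the ambiguity through its own proof. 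The circularity you flag---that ``any vertex can be label~$1$'' would already imply the Graceful Tree Conjecture---is a correct and worthwhile observation, but it pertains to a stronger statement than Theorem~3.2 is actually making.
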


\begin{proof}
We prove by induction. For $T_n$, $n=1,2,3$, there is only one possible tree. The tree is graceful by verification. We start the inductive step from $n=3$ to $n=4$. By adding one vertex and employing Algorithm 3.1, we generate all graceful trees of $T_4$. 

In general, using Algorithm 3.1 with all trees of $T_k$ generates all trees of $T_{k+1}$. This is exhaustive in such a way that we connect an edge to every labeled vertex of a given tree $T_k$. Note that for every additional vertex, there must only be one additional edge. Also, the algorithm eliminates the cyclic possibility. Thus, this completes the proof.
\end{proof}

\section{Concluding Remarks}

Algorithm 2.1 generates all graceful graphs but not necessarily a tree, while Algorithm 3.1 generates all trees but not necessarily graceful. For Algorithm 2.1, finding which adjacency matrix is not a tree requires checking whether there are no cycles or no isolated vertex. However, it is not sufficient to deduce that the graph is a tree by simply having no isolated vertex because a disjoint graph is a possibility. For Algorithm 3.1, since we know that all adjacency matrices constitute trees, we only need either a combinatorial argument to ensure graceful property and prove the graceful tree conjecture or a contradiction proof. 

Currently, the authors are looking into this direction to prove the conjecture.


\makeatletter
\renewcommand{\@biblabel}[1]{[#1]\hfill}
\makeatother

\end{document}